\newtheorem{thm}{Theorem}[section]
\newtheorem{definition}[thm]{Definition}
\newtheorem{lem}[thm]{Lemma}
\theoremstyle{thm}
\theoremstyle{definition}
\newtheorem{rmk}{Remark}[section]
\newcommand{\TT}{\mathcal{T}}
\newcommand{\R}{\mathbb{R}}
\newcommand{\Z}{\mathbb{Z}}
\newcommand{\x}{\mathbf{x}}
\renewcommand{\sp}{\text{span}}
\newcommand{\0}{\mathbf{0}}
\newcommand{\RR}{\mathcal{R}}
\newcommand{\supp}{\mathrm{supp}}
\begin{document}
\title{A note on identifiability conditions in confirmatory factor analysis}
\author{William Leeb\thanks{
    School of Mathematics, University of Minnesota}
}
\date{}
\maketitle
\abstract{
Recently, Chen, Li and Zhang established conditions characterizing asymptotic identifiability of latent factors in confirmatory factor analysis. We give an elementary proof showing that a similar characterization holds non-asymptotically, and prove a related result for identifiability of factor loadings.
}

\section{Introduction}

We consider the problem of recovering a low-rank factorization of a large matrix $M$. We assume that $M = \Theta A^T$, where $\Theta$ and $A$ each have $K$ columns for a known value of $K$ much smaller than either dimension of $M$. We think of the rows of $M$ as labeling members of a population, and the columns of $M$ as labeling attributes. Following the language of factor analysis, we describe the columns of $\Theta$ as ``latent factors'', and the columns of $A$ as ``factor loadings''. For general background on factor analysis, we refer the reader to \cite{anderson1962introduction, bartholomew2011latent, jolliffe2002principal}, and references contained therein.

The factorization $M = \Theta A^T$ is not unique since for any $K$-by-$K$ invertible matrix $B$, $M = (\Theta B) (A B^{-T})^T$. In confirmatory factor analysis we are given additional  ``side information'' that specifies the support of each column of $A$. More precisely, we have a binary matrix $Q$ of the same dimensions as $A$, where $Q_{jk} = 0$ implies $A_{jk}=0$. $Q$ is referred to as a ``design matrix''. The question then arises as to what conditions on $Q$ are enough to ensure uniqueness of $M$'s factorization, up to a rescaling of the columns of $\Theta$ and $A$.

The recent paper \cite{chen2019structured} provides necessary and sufficient conditions on the matrix $Q$ under which individual columns of $\Theta$ (the latent factors) are asymptotically determined up to rescaling, or \emph{identifiable}, under certain assumptions on $\Theta$ and $A$. In this note, we show that a similar characterization applies as well in a non-asymptotic setting. We also provide an elementary proof of a similar characterization of the identifiability of $A$'s columns (the factor loadings), a question which has also attracted interest \cite{reiersol1950identifiability, koopmans1950identification, anderson1956statistical, shapiro1985identifiability,  bing2019adaptive}.

The remainder of this note is structured as follows. In Section \ref{sec-model}, we describe the precise model and terminology we will be using throughout. In Section \ref{sec-main}, we state and prove the main results, namely characterizing when the columns of $\Theta$ and $A$ are identifiable within our model. In Section \ref{sec-additional}, we compare our results to those in \cite{chen2019structured}.

\section{Definitions and model description}
\label{sec-model}

For positive integers $K$, $N$ and $J$, let $\Theta_1,\dots,\Theta_K$ be vectors in $\R^N$ and let $A_1,\dots,A_K$ be vectors in $\R^J$; and define the matrices $\Theta = [\Theta_1,\dots,\Theta_K] \in \R^{N \times K}$ and $A = [A_1,\dots,A_K] \in \R^{J \times K}$. Define the $N \times J$ matrix $M = \Theta A^T$. Additionally, let $Q \in \{0,1\}^{J \times K}$ be a binary matrix with columns $Q_1,\dots,Q_K$.

\begin{rmk}
\label{rmk-asymptotic}
The main results of this paper, namely Theorems \ref{id-theta} and \ref{id-a}, do not depend on the precise values of $N$ and $J$ (so long as both are sufficiently big), and apply equally well in the doubly-asymptotic setting where $\Theta$ and $A$ both have infinitely many rows; that is, we may view $\Theta_1,\dots,\Theta_K$ and $A_1,\dots,A_K$ as functions on $\Z_+$, the set of positive integers. This is a common assumption in work on factor analysis, and appears in the work \cite{chen2019structured} on which the present work is based. Doubly-asymptotic models for related problems also appear in, for example, \cite{bai2012statistical}, \cite{stock2002forecasting}, \cite{dobriban2019deterministic}, \cite{johnstone2001distribution}, \cite{donoho2018optimal}, \cite{leeb2021optimal}, \cite{dobriban2020optimal}, \cite{hong2018asymptotic}, to give only a partial list. This setting provides a formalization of the ``high-dimensional, large-sample'' regime where the number of parameters (in our setting, $J$) is comparable to the number of observations (in our setting, $N$). Because certain random quantities converge to deterministic limits as $N$ and $J$ grow, this asymptotic model often provides a convenient framework for analyzing statistical phenomena.
\end{rmk}

\begin{definition}
For a subset $S \subset \{1,\dots,K\}$, we define $\RR(S) \subset \Z_+$ to be the set of indices $j \in \Z_+$ such that $Q_k(j) = 1$ whenever $k \in S$, and $Q_k(j)=0$ whenever $k \notin S$.
\end{definition}

We introduce some additional notation. For a vector $\x \in \R^I$ and a subset $\RR \subset \{1,\dots,I\}$, we will denote by $\x(\RR)$ the restriction of $\x$ to $\RR$. If in addition $S \subset \{1,\dots,K\}$ and $B$ is a $I \times K$ matrix, we will denote by $B_{[\RR,S]}$ the submatrix of $B$ with rows from $\RR$ and columns from $S$. We will also use colon notation to denote ranges of indices; for example, $\x(1:n)$ denotes the subvector consisting of the first $n$ entries of $\x$; $B_{[:,S]}$ denotes the submatrix of $B$ with column indices in $S$; and so forth.

We now describe our assumptions on $\Theta$, $A$ and $Q$.

\vspace{1em}

\noindent
\textbf{Model Assumptions}
\begin{enumerate}

\item
\label{Theta-incoherence}
The columns of $\Theta$ are linearly independent.

\item
\label{A-incoherence}

If $S \subset \{1,\dots,K\}$ and $\RR(S)$ is non-empty, then the columns of the submatrix $A_{[\RR(S),S]}$ are linearly independent.

\item
\label{constraint-condition}
For any $k = 1,\dots,K$, $A_k(j)=0$ whenever $Q_k(j) = 0$.

\item
\label{C-bound}
There is a constant $C > 0$ such that
\begin{align}
\sup_{1 \le k \le K \atop i \ge 1} |\Theta_k(i)| < C, \quad
\sup_{1 \le k \le K \atop j \ge 1} |A_k(j)| < C.
\end{align}
\end{enumerate}

With the model described, we now define \emph{identifiability} of the latent factors and factor loadings.

\begin{definition}
The latent factor $\Theta_k$ is \emph{identifiable} if for any decomposition $M = \widetilde \Theta \widetilde A^T$ satisfying assumptions \ref{Theta-incoherence} -- \ref{C-bound}, $\Theta_k$ and $\widetilde \Theta_k$ are linearly dependent (that is, one is a scalar multiple of the other). Similarly, the factor loading $A_k$ is \emph{identifiable} if for any decomposition $M = \widetilde \Theta \widetilde A^T$ satisfying assumptions \ref{Theta-incoherence} -- \ref{C-bound}, $A_k$ and $\widetilde A_k$ are linearly dependent.
\end{definition}

\begin{rmk}
\label{rmk-id}
\cite{chen2019structured} defines identifiability of $\Theta_k$ to mean that the angle between $\Theta_k(1:N)$ and $\widetilde \Theta_k(1:N)$ converges to $0$ as $N \to \infty$, which is a weaker notion than the one we employ. We will compare the two definitions in Section \ref{sec-id}.
\end{rmk}

\begin{rmk}
Assumptions analogous to \ref{Theta-incoherence} and \ref{A-incoherence} are found in \cite{chen2019structured}. In Section \ref{sec-assumptions}, we will show that assumptions \ref{Theta-incoherence} and \ref{A-incoherence} are weaker than those found in \cite{chen2019structured}.
\end{rmk}

\begin{rmk}
Assumption \ref{C-bound} is slightly different than the boundedness assumption from \cite{chen2019structured}. Because we assume the supremum is strictly less than $C$, sufficiently small perturbations are permitted without violating the bound. This simplifies some of the analysis without changing the essential properties of the model.
\end{rmk}

Before stating the main results, we introduce the key concept of \emph{masking}, defined as follows.

\begin{definition}
We say $k'$ \emph{masks} $k$ if $\supp(Q_{k'}) \subset \supp(Q_k)$.
\end{definition}

\section{Main results}
\label{sec-main}

We provide necessary and sufficient conditions on the design matrix $Q$ which characterize when the latent factors $\Theta_k$ and the factor loadings $A_k$ are identifiable. Theorem \ref{id-theta} addresses identifiability of $\Theta_k$; the identifiability condition is similar to the one in \cite{chen2019structured}, but makes sense in a non-asymptotic setting; and the proof of Theorem \ref{id-theta} amounts to a tightening of the proof of Proposition 8 in \cite{chen2019structured}. Theorem \ref{id-a} characterizes identifiability of $A_k$, and appears to be new.

\begin{thm}
\label{id-theta}
For each $k$, $\Theta_k$ is identifiable if and only if $k$ does not mask any $k' \ne k$, or equivalently if 
\begin{align}
\label{eq-intersection}
\{k\} = 
\bigcap_{S \subset \{1,\dots,K\} \atop {k \in S \atop \RR(S) \text{ non-empty} }} S.
\end{align}
\end{thm}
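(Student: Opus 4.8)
The plan is to reduce the statement to a question about the admissible change-of-basis matrices relating two factorizations. First I would record that $\rank(M)=K$: if $Av=\0$ then for each $S$ with $\RR(S)$ non-empty, restricting to the rows $\RR(S)$ kills every term with $l \notin S$ (Assumption \ref{constraint-condition}) and leaves $A_{[\RR(S),S]}\,v_{S}=\0$, so Assumption \ref{A-incoherence} forces $v_{l}=0$ for $l\in S$; ranging over all such $S$ gives $v=\0$ as soon as every column of $Q$ has non-empty support (the case $\supp(Q_k)=\emptyset$ is degenerate, with $\Theta_k$ trivially non-identifiable and $k$ masking every $k'$, and can be set aside). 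Since the columns of $M$ lie in $\sp(\Theta)$ and $\rank(M)=K=\dim\sp(\Theta)$, we get $\sp(M)=\sp(\Theta)$, and likewise $\sp(M)=\sp(\widetilde\Theta)$ for any competing decomposition satisfying Assumption \ref{Theta-incoherence}. Hence $\widetilde\Theta=\Theta B$ for an invertible $K\times K$ matrix $B$; cancelling the injective factor $\Theta$ in $\Theta A^T=\Theta B\widetilde A^T$ yields $\widetilde A=AD$ with $D=B^{-T}$. Because the columns of $\Theta$ are independent, $\widetilde\Theta_k$ is a scalar multiple of $\Theta_k$ if and only if the $k$-th column of $B$ is a multiple of $\e_k$, equivalently if and only if row $k$ of $D^{-1}$ is a multiple of $\e_k^T$.

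The core step is to characterise which $D$ are \emph{admissible}, i.e. for which the matrix $\widetilde A=AD$ still respects the support pattern of $Q$ (Assumption \ref{constraint-condition}). Fixing $S$ with $\RR(S)$ non-empty and a column index $m\notin S$, the requirement $\widetilde A_m(j)=\sum_{l\in S}A_l(j)D_{lm}=0$ for all $j\in\RR(S)$ reads $A_{[\RR(S),S]}\,D_{[S,m]}=\0$, so Assumption \ref{A-incoherence} forces $D_{lm}=0$ for every $l\in S$. Unwinding the quantifiers, $D_{lm}$ may be non-zero only when every $S$ with $\RR(S)$ non-empty that contains $l$ also contains $m$; this says exactly that $\supp(Q_l)\subseteq\supp(Q_m)$, i.e. that $l$ masks $m$. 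Thus admissibility of $D$ is equivalent to its support lying in the masking pattern.

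With this dictionary both directions are short. For sufficiency, if $k$ masks no $k'\ne k$ then every admissible $D$ has $D_{km}=0$ for all $m\ne k$, so row $k$ of $D$ equals $D_{kk}\e_k^T$ with $D_{kk}\ne0$ by invertibility; then $\e_k^T=D_{kk}\e_k^TD^{-1}$ shows row $k$ of $D^{-1}$ is $D_{kk}^{-1}\e_k^T$, and $\Theta_k$ is identifiable. For necessity, if $k$ masks some $k'\ne k$ then $D=I+t\,\e_k\e_{k'}^T$ is admissible (its only off-diagonal entry $D_{kk'}$ sits in the masking pattern), and it produces $\widetilde A_{k'}=A_{k'}+tA_k$ and $\widetilde\Theta_k=\Theta_k-t\Theta_{k'}$, the latter not a multiple of $\Theta_k$ by linear independence. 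I expect the main obstacle to be verifying that this perturbed pair genuinely satisfies all of Assumptions \ref{Theta-incoherence}--\ref{C-bound}: Assumption \ref{constraint-condition} holds because $\supp(Q_k)\subseteq\supp(Q_{k'})$ keeps $tA_k$ inside the allowed support of column $k'$; Assumption \ref{A-incoherence} requires checking that on each $\RR(S)$ the submatrix is either literally unchanged (when $k'\notin S$, or when $k\notin S$ so that $A_k(\RR(S))=\0$) or is right-multiplied by the invertible elementary matrix obtained by restricting $I+t\,\e_k\e_{k'}^T$ to indices in $S$ (when $k,k'\in S$), so column independence is preserved in every case; and Assumption \ref{C-bound} is precisely where the strict inequality is used, choosing $|t|$ small enough that the two perturbed columns still have supremum below $C$.

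Finally I would record the elementary equivalence with \eqref{eq-intersection}. Writing $T=\bigcap_{S\ni k,\ \RR(S)\neq\emptyset}S$, one always has $k\in T$, and $k'\in T$ for some $k'\ne k$ means that every $S$ with $\RR(S)$ non-empty containing $k$ also contains $k'$, which by the argument above is the same as $\supp(Q_k)\subseteq\supp(Q_{k'})$, i.e. $k$ masks $k'$. Hence $T=\{k\}$ if and only if $k$ masks no $k'\ne k$, matching the masking formulation and completing the proof.
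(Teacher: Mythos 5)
Your proposal is essentially correct in its main line, and it takes a genuinely different route from the paper in the sufficiency direction; but there is one concrete gap. Your entire dictionary rests on the reduction $\widetilde\Theta = \Theta B$, $\widetilde A = AD$ with $D = B^{-T}$, which you justify by proving $\rank(M) = K$ --- and that step, as you yourself note, needs \emph{every} column of $Q$ to have non-empty support. Your parenthetical disclaimer only disposes of the case $\supp(Q_k) = \emptyset$ for the theorem's own index $k$. But Theorem \ref{id-theta}, unlike Theorem \ref{id-a}, does not assume the other supports are non-empty. If $\supp(Q_{k''}) = \emptyset$ for some $k'' \ne k$ while $\supp(Q_k) \ne \emptyset$ (note $k$ then does \emph{not} mask $k''$, so the theorem still asserts identifiability of $\Theta_k$ when $k$ masks nothing), assumption \ref{constraint-condition} forces $A_{k''} = \0$, hence $\rank(M) \le K-1$, the column space of $M$ is a proper subspace of $\sp\{\widetilde\Theta_1,\dots,\widetilde\Theta_K\}$, and the equation $\widetilde\Theta = \Theta B$ fails: $\widetilde\Theta_{k''}$ may be any bounded vector preserving linear independence, including one outside the span of $\Theta$'s columns. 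The gap is confined to the sufficiency direction (your explicit perturbation for necessity never invokes the rank claim) and is repairable: in any admissible factorization the zero-support columns satisfy $A_{k''} = \widetilde A_{k''} = \0$, so one may delete them from $\Theta$, $\widetilde\Theta$, $A$, $\widetilde A$, $Q$; the sets $\RR(S)$ for surviving $S$ are unchanged (the deleted constraint $Q_{k''}(j) = 0$ was automatic), masking among surviving indices is unchanged, and your argument then applies to the reduced model. But as written, this case is uncovered.

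For comparison: the paper's sufficiency argument never forms a change-of-basis matrix. It identifies $\sp\{\Theta_k\}$ intrinsically from $M$, proving that $V_S = \sp\{M_j : j \in \RR(S)\}$ equals $\sp\{\Theta_l : l \in S\}$ with $\dim V_S = |S|$, then using $V_S \cap V_{S'} = V_{S \cap S'}$ together with Lemma \ref{lem-intersection} to get $\bigcap_S V_S = \sp\{\Theta_k\} = \sp\{\widetilde\Theta_k\}$; because it works with these subspaces directly, it needs no full-rank hypothesis on $M$, which is exactly why it dodges the edge case above. Your admissibility dictionary ($D_{lm} \ne 0$ only if $l$ masks $m$) buys something the paper's argument does not state explicitly: it characterizes all admissible transformations at once, and your single elementary matrix $D = I + t\,\e_k\e_{k'}^T$ simultaneously witnesses non-identifiability of $\Theta_k$ and of $A_{k'}$, making transparent the coupling the paper records in the remark following Theorem \ref{id-a}. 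Your necessity construction and its verification coincide with the paper's (the case analysis over $\RR(S)$ is Lemma \ref{lem-independent2}, and the use of the strict inequality in assumption \ref{C-bound} is the same), and your closing combinatorial equivalence between the masking condition and \eqref{eq-intersection} is precisely Lemmas \ref{lem-intersection} and \ref{lem-intersection2}.
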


\begin{thm}
\label{id-a}
Suppose $\supp(Q_k)$ is non-empty for all $k$. Then for each $k$, $A_k$ is identifiable if and only if no $k' \ne k$ masks $k$.
\end{thm}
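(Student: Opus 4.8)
The plan is to reduce both directions to a single structural observation: under the hypotheses, $A$ and any competitor $\wtilde A$ both have full column rank $K$, so the two factorizations of $M$ differ only by an invertible change of basis, after which the support pattern $Q$ pins down each loading up to scale exactly when no other column is masked. First I would record full column rank. The sets $\RR(S)$ partition the row index set, and on $\RR(S)$ only the columns $A_m$ with $m \in S$ can be nonzero; hence a relation $\sum_m c_m A_m = 0$ restricts on each nonempty $\RR(S)$ to $\sum_{m \in S} c_m A_m(\RR(S)) = 0$, and assumption \ref{A-incoherence} forces $c_m = 0$ for every $m$ contained in some $S$ with $\RR(S)$ nonempty. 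Since each $\supp(Q_m)$ is nonempty, every index $m$ is of this type, so $A$ has rank $K$; the same argument applies to $\wtilde A$. Consequently $\text{col}(M) = \text{col}(\Theta) = \text{col}(\wtilde\Theta)$, all of dimension $K$, so $\wtilde\Theta = \Theta B$ for an invertible $B$, and cancelling the left-invertible $\Theta$ in $\Theta A^T = \Theta B \wtilde A^T$ gives $\wtilde A = A D$ with $D = B^{-T}$ invertible.

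For the sufficiency direction, assume no $k' \ne k$ masks $k$ and write $\wtilde A_k = \sum_m D_{mk} A_m$. Because the $A_m$ are independent, $\wtilde A_k$ is proportional to $A_k$ precisely when $D_{mk} = 0$ for all $m \ne k$. I would set $w = \sum_{m \ne k} D_{mk} A_m$ and $T = \{m \ne k : D_{mk} \ne 0\}$. Since $\wtilde A$ respects the same design matrix, $\wtilde A_k(j) = 0$ whenever $Q_k(j) = 0$, and as $A_k(j) = 0$ there as well this yields $\supp(w) \subseteq \supp(Q_k)$. Fixing any $S$ with $\RR(S)$ nonempty and $k \notin S$, the vector $w$ vanishes on $\RR(S)$ (those rows lie outside $\supp(Q_k)$), so $\sum_{m \in T \cap S} D_{mk} A_m(\RR(S)) = 0$, and assumption \ref{A-incoherence} forces $T \cap S = \emptyset$.

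The crux is converting this into a masking statement. For $m \in T$, the previous step shows that every nonempty $\RR(S)$ containing $m$ must also contain $k$; since $\supp(Q_m)$ is the union of the $\RR(S)$ over $S \ni m$, this says exactly that $\supp(Q_m) \subseteq \supp(Q_k)$, i.e. $m$ masks $k$, contradicting the hypothesis unless $T = \emptyset$. Hence $\wtilde A_k = D_{kk} A_k$ is a scalar multiple of $A_k$, proving identifiability. I expect this step, passing from the row-wise independence constraints on the partition $\{\RR(S)\}$ to a containment of supports and then reading that containment as masking, to be the main obstacle, since it is where the combinatorial geometry of $Q$ actually enters.

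For the necessity direction, suppose some $k' \ne k$ masks $k$, so $\supp(Q_{k'}) \subseteq \supp(Q_k)$, and I would exhibit a competing decomposition by setting $\wtilde A_k = A_k + \alpha A_{k'}$ and $\wtilde A_\ell = A_\ell$ for $\ell \ne k$, with the compensating $\wtilde\Theta_{k'} = \Theta_{k'} - \alpha \Theta_k$ and $\wtilde\Theta_\ell = \Theta_\ell$ otherwise, which one checks leaves $\wtilde\Theta\wtilde A^T = M$ since the cross terms cancel. Then for small $\alpha \ne 0$ I would verify assumptions \ref{Theta-incoherence}--\ref{C-bound}: independence of $\wtilde\Theta$'s columns is immediate as the change of basis is invertible; the bound \ref{C-bound} survives because the original suprema are strictly below $C$ and the perturbation is of order $\alpha$; the constraint \ref{constraint-condition} holds because $\supp(Q_{k'}) \subseteq \supp(Q_k)$ keeps $\wtilde A_k$ supported within $\supp(Q_k)$; and assumption \ref{A-incoherence} is preserved since on each $\RR(S)$ with $k \in S$ the modified $k$-th column is either unchanged (when $k' \notin S$, where $A_{k'}(\RR(S)) = 0$) or obtained by an elementary column operation adding $\alpha$ times the $k'$-th column (when $k' \in S$), neither of which affects independence. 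Finally, because $A$ has full column rank the vectors $A_k$ and $A_{k'}$ are independent, so $\wtilde A_k$ is not a scalar multiple of $A_k$, and $A_k$ is not identifiable.
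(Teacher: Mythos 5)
Your proposal is correct and follows essentially the same route as the paper: the same rank-one perturbation (replacing $\Theta_{k'}$ by $\Theta_{k'} - \alpha\Theta_k$ and $A_k$ by $A_k + \alpha A_{k'}$) for the necessity direction, with the same verification as the paper's Lemmas \ref{lem-independent} and \ref{lem-independent2}, and for sufficiency the same core device of restricting a vanishing linear combination of the $A_m$ to sets $\RR(S)$ with $k \notin S$ and invoking assumption \ref{A-incoherence} to kill the coefficients. The only difference is organizational: you first establish that $A$ and $\widetilde A$ have full column rank so that $\widetilde A = AD$ with $D$ invertible, whereas the paper gets by with the weaker containment of the column space of $\widetilde A$ in that of $A$ (which follows from assumption \ref{Theta-incoherence} alone), and the paper argues the sufficiency contrapositively while you argue it directly; neither difference changes the substance.
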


\begin{rmk}
The identifiability conditions in Theorems \ref{id-theta} and \ref{id-a} may be efficiently checked for any given design matrix $Q$, simply by verifying that $\supp(Q_k)$ does not lie entirely within $\supp(Q_{k'})$ when $k' \ne k$.
\end{rmk}

\begin{rmk}

Clearly, if all $\Theta_k$ are identifiable -- that is, the entire matrix $\Theta$ is identifiable -- then so too is the entire matrix $A$; and vice versa. Theorems \ref{id-theta} and \ref{id-a} together provide more granular information on the relationship between $\Theta$ and $A$. Specifically, if $\Theta_k$ is \emph{not} identifiable, Theorem \ref{id-theta} states that there must be some $k' \ne k$ that is masked by $k$. Theorem \ref{id-a}, in turn, tells us that $A_{k'}$ is not identifiable. That is, knowing which column of  $\Theta$ is not identifiable automatically tells us what which column of $A$ is not identifiable. The same reasoning applies in reverse as well: knowing which column of  $A$ is not identifiable automatically tells us what which column of $\Theta$ is not identifiable.

\end{rmk}

\subsection{Technical lemmas}

\begin{lem}
\label{lem-independent}
Suppose $\supp(Q_{k'})$ is not empty. If $k'$ masks $k \ne k'$, then $A_k$ and $A_{k'}$ are linearly independent.
\end{lem}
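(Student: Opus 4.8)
The plan is to locate a single block of rows on which both $A_k$ and $A_{k'}$ are simultaneously ``active'', and then to invoke Model Assumption \ref{A-incoherence} directly on that block. Since $\supp(Q_{k'})$ is assumed non-empty, I would first fix an index $j_0 \in \supp(Q_{k'})$ and set $S_0 = \{\ell : Q_\ell(j_0) = 1\}$, the exact support pattern of row $j_0$. By construction $j_0 \in \RR(S_0)$, so $\RR(S_0)$ is non-empty, which is precisely the hypothesis needed to apply assumption \ref{A-incoherence} to the set $S_0$.

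The key observation is that both $k$ and $k'$ belong to $S_0$. Indeed, $Q_{k'}(j_0) = 1$ gives $k' \in S_0$ immediately; and because $k'$ masks $k$, we have $\supp(Q_{k'}) \subset \supp(Q_k)$, so $j_0 \in \supp(Q_k)$, i.e. $Q_k(j_0) = 1$, which gives $k \in S_0$. Thus $\{k, k'\} \subset S_0$ with $k \ne k'$.

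Now assumption \ref{A-incoherence} asserts that the columns of $A_{[\RR(S_0), S_0]}$ are linearly independent. Two of these columns are the restrictions $A_k(\RR(S_0))$ and $A_{k'}(\RR(S_0))$, so in particular these two restricted vectors are linearly independent. I would then lift this back to the full vectors: any relation $\alpha A_k + \beta A_{k'} = \0$ in $\R^J$, restricted to the rows in $\RR(S_0)$, yields $\alpha A_k(\RR(S_0)) + \beta A_{k'}(\RR(S_0)) = \0$, forcing $\alpha = \beta = 0$. Hence $A_k$ and $A_{k'}$ are linearly independent.

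There is no serious obstacle here; the argument is essentially a matter of selecting the right index set. The only step requiring care is the choice of $S_0$ as the exact support pattern of a witnessing row $j_0$ (rather than, say, $\{k,k'\}$ itself, whose $\RR$ could be empty), together with the verification that the masking hypothesis forces $k \in S_0$. Once $S_0$ is arranged so that $\RR(S_0) \ne \emptyset$ and $\{k,k'\} \subset S_0$, assumption \ref{A-incoherence} supplies the independence immediately.
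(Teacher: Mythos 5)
Your proposal is correct and follows essentially the same route as the paper's proof: pick a witnessing index $j_0 \in \supp(Q_{k'})$, take $S_0$ to be the exact support pattern of that row so that $j_0 \in \RR(S_0)$ and $k, k' \in S_0$, and apply assumption \ref{A-incoherence} to $A_{[\RR(S_0),S_0]}$. The only difference is that you spell out the verification that masking forces $k \in S_0$ and the lifting of independence from the restricted vectors to the full columns, steps the paper leaves implicit.
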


\begin{proof}
Take any index $j \in \supp(Q_{k'})$, and let $S = \{k'':Q_{k''}(j) = 1\}$; then $j$ is contained in $\RR(S)$, and $k,k' \in S$. From assumption \ref{A-incoherence} $A_{[\RR(S),S]}$ has linearly independent columns; in particular $A_k(\RR(S))$ and $A_{k'}(\RR(S))$ are independent, and hence so too are $A_{k'}$ and $A_k$.
\end{proof}

\begin{lem}
\label{lem-independent2}
Suppose $k'$ masks $k \ne k'$, and let $\epsilon \in \R$. Then $\widetilde A = [A_1,\dots,A_{k-1},A_k + \epsilon A_{k'}, A_{k+1}, \dots, A_K]$ satisfies assumptions \ref{A-incoherence} and \ref{constraint-condition}.
\end{lem}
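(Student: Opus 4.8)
The plan is to verify the two required properties separately, treating assumption \ref{constraint-condition} first since it is the more immediate of the two. For the support/constraint condition, I would show that replacing $A_k$ by $A_k + \epsilon A_{k'}$ does not introduce any nonzero entries outside $\supp(Q_k)$. The crucial observation is that $k'$ masks $k$, so $\supp(Q_{k'}) \subset \supp(Q_k)$. Whenever $Q_k(j) = 0$, we have $j \notin \supp(Q_k)$, hence $j \notin \supp(Q_{k'})$, so both $A_k(j) = 0$ and $A_{k'}(j) = 0$ by assumption \ref{constraint-condition} applied to the original $A$; therefore $(A_k + \epsilon A_{k'})(j) = 0$. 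The other columns of $\widetilde A$ are unchanged, so they continue to satisfy \ref{constraint-condition}. This step is essentially bookkeeping with the masking hypothesis.

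The heart of the argument is verifying assumption \ref{A-incoherence} for $\widetilde A$: for every $S$ with $\RR(S)$ non-empty, the columns of $\widetilde A_{[\RR(S),S]}$ must remain linearly independent. Here I would split into cases according to whether $k \in S$, since only the $k$-th column of $A$ has changed. If $k \notin S$, the submatrix $\widetilde A_{[\RR(S),S]}$ coincides with $A_{[\RR(S),S]}$, which is independent by the hypothesis on $A$, so there is nothing to prove. The interesting case is $k \in S$, and the plan is to exhibit an invertible change of columns relating $\widetilde A_{[\RR(S),S]}$ to $A_{[\RR(S),S]}$.

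The key structural fact I would use is that because $k'$ masks $k$, any index $j \in \RR(S)$ with $k \in S$ automatically has $Q_{k'}(j) = Q_k(j) = 1$ (masking says $\supp(Q_{k'}) \subset \supp(Q_k)$, but more directly: $j \in \RR(S)$ with $k \in S$ forces $Q_k(j)=1$, and whether $Q_{k'}(j)=1$ is governed by whether $k' \in S$). This means I should further distinguish whether $k' \in S$. If $k' \in S$, then on $\RR(S)$ the operation $A_k \mapsto A_k + \epsilon A_{k'}$ is an elementary column operation \emph{within} the submatrix $\widetilde A_{[\RR(S),S]}$, adding $\epsilon$ times the $k'$-column to the $k$-column; such an operation is realized by right-multiplication by a unit-triangular (hence invertible) matrix and preserves column rank, so independence is inherited from $A_{[\RR(S),S]}$. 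If $k' \notin S$, then I claim $\RR(S)$ is in fact empty, giving a vacuous conclusion: for $j \in \RR(S)$ we need $Q_{k''}(j) = 0$ for all $k'' \notin S$, in particular $Q_{k'}(j) = 0$, i.e. $j \notin \supp(Q_{k'})$; but $k \in S$ forces $j \in \supp(Q_k) \supset \supp(Q_{k'})$, and one must check this does not immediately contradict — rather, on such $\RR(S)$ the column $A_{k'}$ restricts to zero, so $A_k + \epsilon A_{k'}$ agrees with $A_k$ on $\RR(S)$ and $\widetilde A_{[\RR(S),S]} = A_{[\RR(S),S]}$ again, and independence is inherited.

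The main obstacle I anticipate is the careful case analysis in the preceding paragraph, specifically pinning down the restriction of $A_{k'}$ to $\RR(S)$ when $k' \notin S$ and confirming it vanishes there; this is what makes the perturbation invisible inside the relevant submatrix. Once one sees that the perturbation either acts as an invertible elementary column operation (when $k' \in S$) or disappears entirely on $\RR(S)$ (when $k' \notin S$), linear independence transfers directly from $A_{[\RR(S),S]}$ to $\widetilde A_{[\RR(S),S]}$ via assumption \ref{A-incoherence}, and the proof concludes.
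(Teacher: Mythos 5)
Your proposal is correct and follows essentially the same route as the paper's own proof: the constraint condition via $\supp(Q_{k'}) \subset \supp(Q_k)$, then the case split on whether $k' \in S$, using an elementary column operation when $k' \in S$ and the vanishing of $A_{k'}$ on $\RR(S)$ when $k' \notin S$ so that the perturbation is invisible in the submatrix. Your momentary claim that $\RR(S)$ must be empty when $k' \notin S$ is false (masking permits $j \in \supp(Q_k) \setminus \supp(Q_{k'})$, so such $\RR(S)$ can be non-empty), but you retract it in the same sentence and land on the correct resolution, which is exactly the paper's.
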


\begin{proof}
Assumption \ref{constraint-condition} is immediate, since the support of $A_k + \epsilon A_{k'}$ is still contained in $\supp(Q_k)$, because $\supp(A_{k'}) \subset \supp(A_k)$.

We now show that assumption \ref{A-incoherence} holds. Without loss of generality, take $k=K$ and $k'=1$; so $1$ masks $K$. Take any subset $S \subset \{1,\dots,K\}$, with $\RR(S)$ non-empty. We will show that the columns of $\widetilde A_{[\RR(S),S]}$ are linearly independent. This follows immediately from assumption \ref{A-incoherence} for $A$ if $K \notin S$; so assume $K \in S$.

First suppose $1 \in S$. From assumption \ref{A-incoherence} the vectors $A_k(\RR(S))$, $k \in S$, are linearly independent. Since $1$ and $K$ are in $S$, linear independence is preserved after replacing $A_K(\RR(S))$ with $A_K(\RR(S)) + \epsilon A_1(\RR(S))$.

Next, suppose $1 \notin S$. Then by definition $\supp(Q_1)$ is disjoint from $\RR(S)$, so $A_1(j) = 0$ for $j \in \RR(S)$. Consequently, $A_K(\RR(S)) = A_K(\RR(S)) + \epsilon A_1(\RR(S))$, and since $A_k(\RR(S))$, $k \in S$, are linearly independent, the same is true after replacing $A_K(\RR(S))$ by $A_K(\RR(S)) + \epsilon A_1(\RR(S))$.
\end{proof}

\begin{lem}
\label{lem-intersection}
Suppose $k$ does not mask any $k' \ne k$. Then
\begin{align}
\label{eq-intersection2}
\{k\} = 
\bigcap_{S \subset \{1,\dots,K\} \atop {k \in S \atop \RR(S) \text{ non-empty} }} S.
\end{align}
\end{lem}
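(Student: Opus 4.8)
The plan is to prove the two set inclusions separately. One direction is immediate: every index set $S$ appearing in the intersection satisfies $k \in S$ by definition of the indexing family, so $k$ belongs to every such $S$, giving $\{k\} \subseteq \bigcap_S S$. To be sure the intersection is a genuine subset of $\{1,\dots,K\}$ rather than the vacuous empty intersection, I would first check that the indexing family is nonempty. Under the hypothesis that $k$ does not mask any $k' \ne k$, the support $\supp(Q_k)$ must be nonempty: were $\supp(Q_k)$ empty, then $\supp(Q_k) = \emptyset \subset \supp(Q_{k'})$ for every $k'$, so $k$ would mask all other indices, contradicting the hypothesis. Given any $j \in \supp(Q_k)$, the set $S = \{k'' : Q_{k''}(j) = 1\}$ then satisfies $k \in S$ and $j \in \RR(S)$, so the family is nonempty.

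The substance is the reverse inclusion $\bigcap_S S \subseteq \{k\}$, which I would establish by showing that each $k' \ne k$ is excluded from at least one $S$ in the family. This is where the masking hypothesis enters. Since $k$ does not mask $k'$, we have $\supp(Q_k) \not\subset \supp(Q_{k'})$, so there is an index $j$ with $Q_k(j) = 1$ but $Q_{k'}(j) = 0$. Setting $S := \{k'' : Q_{k''}(j) = 1\}$, I claim this $S$ lies in the indexing family yet omits $k'$. Indeed, $j \in \RR(S)$ by construction, so $\RR(S)$ is nonempty; moreover $k \in S$ because $Q_k(j) = 1$, and $k' \notin S$ because $Q_{k'}(j) = 0$. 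Hence $k'$ cannot belong to $\bigcap_S S$, and as $k'$ was an arbitrary index distinct from $k$, this forces $\bigcap_S S \subseteq \{k\}$.

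Combining the two inclusions yields the claimed equality. I do not anticipate any serious obstacle: the argument is a direct set-theoretic manipulation, and the only point requiring care is the bookkeeping at the boundary, namely confirming that the indexing family is nonempty so that the intersection is well defined, which is precisely where the hypothesis is used to force $\supp(Q_k)$ to be nonempty. The single idea driving the proof is the translation of ``$k$ does not mask $k'$'' into the existence of a witnessing row $j$ that separates $k$ from $k'$, and the rest follows mechanically.
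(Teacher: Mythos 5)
Your proposal is correct and follows essentially the same route as the paper's proof: both establish non-emptiness of the indexing family by noting that an empty $\supp(Q_k)$ would make $k$ mask every other index, and both prove the reverse inclusion by taking a witness $j \in \supp(Q_k) \setminus \supp(Q_{k'})$ and forming $S = \{k'' : Q_{k''}(j) = 1\}$, which contains $k$ but not $k'$ and has $j \in \RR(S)$. No gaps; your argument matches the paper's in structure and substance.
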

\begin{proof}
Because $k$ does not mask any other $k'$, there must exist some subset $S \subset \{1,\dots,K\}$ containing $k$ with $\RR(S)$ non-empty. Indeed, $\supp(Q_k)$ must be non-empty, since otherwise $k$ would mask every $k'$. But each $j \in \supp(Q_k)$ is contained in $\RR(S)$, where $S = \{k'':Q_{k''}(j)=1\}$; and $k \in S$. Consequently, the right side of \eqref{eq-intersection2} is non-empty, and obviously contains $k$.

To show the reverse inclusion, take any $k' \ne k$. Since $k$ does not mask $k'$, $\supp(Q_{k}) \setminus \supp(Q_{k'})$ is non-empty. Each $j \in \supp(Q_{k}) \setminus \supp(Q_{k'})$ is contained in $\RR(S)$, where $S = \{k'':Q_{k''}(j)=1\}$ contains $k$ but not $k'$, implying that $k'$ is not contained in the right side of \eqref{eq-intersection2}.
\end{proof}

The converse to Lemma \ref{lem-intersection} is also true:

\begin{lem}
\label{lem-intersection2}
Suppose \eqref{eq-intersection2} holds. Then $k$ does not mask any $k' \ne k$.
\end{lem}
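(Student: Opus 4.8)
The plan is to prove the contrapositive: assuming that $k$ masks some $k' \ne k$, I would show that $k'$ also lies in the intersection on the right-hand side of \eqref{eq-intersection2}. Since $k' \ne k$, this makes the intersection strictly larger than $\{k\}$, which negates the hypothesis of the lemma and thereby establishes the stated implication.

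The key is to translate the masking hypothesis into a statement about membership in the index sets $S$ appearing in the intersection. By definition, $k$ masking $k'$ means $\supp(Q_k) \subset \supp(Q_{k'})$, so that $Q_k(j) = 1$ forces $Q_{k'}(j) = 1$ for every row $j$. On the other hand, each $S$ in the intersection satisfies $k \in S$ and has $\RR(S)$ non-empty, and by the definition of $\RR(S)$, any witness $j \in \RR(S)$ has support pattern exactly $S$; that is, $S = \{k'' : Q_{k''}(j) = 1\}$.

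The main step, then, is the following: fix an arbitrary $S$ with $k \in S$ and $\RR(S)$ non-empty, and pick any $j \in \RR(S)$, which exists by non-emptiness. Since $k \in S$ we have $Q_k(j) = 1$, and the masking hypothesis yields $Q_{k'}(j) = 1$; because $S$ is precisely the support pattern of $j$, this forces $k' \in S$. As $S$ was arbitrary among the sets indexing the intersection, $k'$ belongs to every such $S$, hence to the intersection. Together with $k$, this shows $\{k,k'\}$ is contained in the intersection, contradicting \eqref{eq-intersection2}.

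I do not expect a serious obstacle, as this is a short and direct argument — essentially the mirror image of the reverse-inclusion step in the proof of Lemma \ref{lem-intersection}. The only point requiring mild care is to correctly produce the witness $j \in \RR(S)$ and to use that, for this $j$, membership of an index $\ell$ in $S$ is equivalent to $Q_\ell(j) = 1$; once this equivalence is in hand, the chain $k \in S \Rightarrow Q_k(j)=1 \Rightarrow Q_{k'}(j)=1 \Rightarrow k' \in S$ closes the argument.
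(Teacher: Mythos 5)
Your proof is correct and takes essentially the same route as the paper: both arguments show that every $S$ with $k \in S$ and $\RR(S)$ non-empty must also contain $k'$ (you argue pointwise via a witness $j \in \RR(S)$ and the chain $Q_k(j)=1 \Rightarrow Q_{k'}(j)=1 \Rightarrow k' \in S$, while the paper states the contrapositive, that $k' \notin S$ forces $\RR(S) \subset \supp(Q_k) \setminus \supp(Q_{k'}) = \emptyset$), so $k'$ lies in the intersection, contradicting \eqref{eq-intersection2}. The only cosmetic difference is that the paper separately verifies $\supp(Q_k)$ is non-empty so the index family is non-empty, a point that in your contradiction framing is already implicit because \eqref{eq-intersection2} asserts the intersection equals the non-empty set $\{k\}$.
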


\begin{proof}
Without loss of generality, suppose $k=K$. If $\supp(Q_K)$ were empty (i.e.\ $Q_K(j)=0$ for all $j$), then for any $S \subset \{1,\dots,K\}$ containing $K$, $\RR(S) \subset \supp(Q_K)$ would also be empty, and the right side of \eqref{eq-intersection2} would be empty; a contradiction. Consequently, $\supp(Q_K)$ must be non-empty.

For contradiction, suppose without loss of generality that $K$ masks $1$; then $\supp(Q_K) \setminus \supp(Q_1)$ is empty. If $S \subset \{1,\dots,K\}$ contains $K$ but not $1$, then $\RR(S) \subset \supp(Q_K) \setminus \supp(Q_1)$, so $\RR(S)$ is also empty and $S$ is not included in the right side of \eqref{eq-intersection2}. Therefore, the only $S$ included on the right side of \eqref{eq-intersection2} contain both $K$ and $1$. But then $1$ is also in the intersection, a contradiction.
\end{proof}

\subsection{Proof of Theorem \ref{id-theta}}
First, suppose, without loss of generality, that $K$ masks $1$. We write:
\begin{align}
M = \Theta_1 A_1^T + \Theta_2 A_2^T + \dots + \Theta_K A_K^T
= \Theta_1 (A_1 + \epsilon A_K)^T  + \Theta_2 A_2^T + \dots 
    + (\Theta_K  - \epsilon \Theta_1 )A_K^T,
\end{align}
where $\epsilon$ is sufficiently small so as to not violate assumption \ref{C-bound}. From Lemma \ref{lem-independent2}, assumptions  \ref{A-incoherence} and \ref{constraint-condition} and are still satisfied by $A_1 + \epsilon A_K,A_2,\dots, A_K$. Assumption \ref{Theta-incoherence} still holds if we replace $\Theta_K$ by $\Theta_K  - \epsilon \Theta_1$. Since assumption \ref{Theta-incoherence} implies $\Theta_K - \epsilon \Theta_1$ and $\Theta_K$ are linearly independent, $\Theta_K$ is not identifiable.

For the other direction, assume that component $K$ does not mask any other component $k \ne K$. Suppose $M = \widetilde \Theta \widetilde A^T$ is another factorization of $M$ satisfying the model assumptions \ref{Theta-incoherence} -- \ref{C-bound}. We will show that $\Theta_K$ and $\widetilde \Theta_K$ are linearly dependent. 

Observe that $\supp(Q_K)$ is non-empty, since otherwise it would mask every $k$. Each $j \in \supp(Q_K)$ is contained in $\RR(S)$, where $S=\{k:Q_{k}(j) = 1\}$. Then if $k \notin S$ and $j \in \RR(S)$, we must have $A_k(j)=0$. Consequently, if $j \in \RR(S)$, $M_j(i) = \sum_{k=1}^{K} \Theta_k(i) A_k(j) =\sum_{k\in S} \Theta_k(i) A_k(j)$, and so we may write
\begin{align}
M_{[:,\RR(S)]} = \Theta_{[:,S]} (A_{[\RR(S),S]})^T.
\end{align}
By assumption \ref{A-incoherence}, $A_{[\RR(S),S]}$ has linearly independent columns, and since $\Theta$ has linearly independent columns, the column space of $M_{[:,\RR(S)]}$ has dimension $|S|$. Consequently, if we define $V_S \equiv \sp\{M_j : j \in \RR(S)\}$, then
\begin{math}
V_S = \sp\{\Theta_k : k \in S\}
\end{math}
and $\dim(V_S) = |S|$.

Because the $\Theta_k$ are linearly independent and $V_S = \sp\{\Theta_k : k \in S\}$, we have
\begin{math}
V_{S} \cap V_{S'} = V_{S \cap S'}.
\end{math}
Consequently
\begin{align}
\Theta_K \in V_{S_K}
= \bigcap_{S \subset \{1,\dots,K\} \atop {K \in S \atop \RR(S) \text{ non-empty} }} V_S 
\end{align}
where $S_K$ is the intersection of all sets $S$ with $K \in S$ and $\RR(S)$ non-empty. But because $K$ does not mask any $k \ne K$, Lemma \ref{lem-intersection} implies that $S_K = \{K\}$, and so $V_{S_K} = \sp\{\Theta_K\}$. But the exact same argument with $\widetilde \Theta$ and $\widetilde A$ in place of $\Theta$ and $A$ also shows $V_{S_K} = \sp\{\widetilde \Theta_K\}$. Consequently, $\Theta_K$ and $\widetilde \Theta_K$ are linearly dependent.
\subsection{Proof of Theorem \ref{id-a}}
First, let us suppose without loss of generality that $k=K$ is masked by $k'=1$. We write
\begin{align}
M = \Theta_1 A_1^T + \Theta_2 A_2^T + \dots + \Theta_K A_K^T
=(\Theta_1 - \epsilon\Theta_K) A_1^T + \Theta_2 A_2^T + \dots 
    + \Theta_K (A_K + \epsilon A_1)^T,
\end{align}
where $\epsilon$ is sufficiently small so as to not violate assumption \ref{C-bound}. From Lemma \ref{lem-independent2}, assumptions  \ref{A-incoherence} and \ref{constraint-condition} and are still satisfied by $A_1,A_2,\dots, A_K + \epsilon A_1$. Assumption \ref{Theta-incoherence} still holds if we replace $\Theta_1$ by $\Theta_1  - \epsilon \Theta_K$. From Lemma \ref{lem-independent}, $A_K + \epsilon A_1$ and $A_K$ are linearly independent. Consequently, $A_K$ is not identifiable.

For the other implication, suppose $M = \widetilde \Theta \widetilde A^T$ is another factorization within the same model, and that $\widetilde A_K$ and $A_K$ are linearly independent.  Let $\RR_K = \supp(Q_K)^c$ be the set of roots of $Q_K$; then $A_K$ and $\widetilde A_K$ are both zero on $\RR_K$.

Since the column space of $\widetilde A$ is contained in the column space of $A$, $\widetilde A_K$ is in the span of $A_1,\dots,A_K$. Therefore, there are coefficients $c_1,\dots,c_{K-1}$, not all zero, so that
\begin{align}
\label{eq-vanishing}
\sum_{k=1}^{K-1} c_k A_{k} (\RR_K) = \0.
\end{align}

Suppose, without loss of generality, that $c_1 \ne 0$. We will show that $1$ masks $K$. Suppose not; then $\supp(Q_1) \cap \RR_K = \supp(Q_1) \setminus \supp(Q_K)$ is non-empty. Take any $j \in \supp(Q_1) \cap \RR_K$; then $j$ is contained in $\RR(S)$, where $S=\{k:Q_{k}(j) = 1\}$. Since $K \notin S$, $\RR(S) \subset  \RR_K$. Furthermore, if $k \notin S$ and $j' \in \RR(S)$ then $A_k(j') = 0$. Hence from \eqref{eq-vanishing}
\begin{align}
\sum_{k \in S} c_k A_{k} (\RR(S)) = \0.
\end{align}
But by assumption \ref{A-incoherence}, the columns of $A_{[\RR(S),S]}$ are linearly independent; so we must have $c_k = 0$ for all $k \in S$. Since $1 \in S$, this contradicts that $c_1 \ne 0$.

\section{Discussion}
\label{sec-additional}

We conclude with a discussion comparing our work to \cite{chen2019structured}. In this section, we will treat $\Theta_k$ and $A_k$ as functions on $\Z_+$, rather than finite-length vectors, since this is the setting used in \cite{chen2019structured}. As noted in Remark \ref{rmk-asymptotic}, Theorems \ref{id-theta} and \ref{id-a} are valid in this doubly-asymptotic model. 

To aid the discussion, it is convenient to define the following notion.
\begin{definition}
A subset $\Delta \subset \Z_+$ is \emph{negligible} if
\begin{align}
\lim_{N \to \infty} \frac{|\Delta \cap \{1,\dots,N\}|}{N} = 0.
\end{align}
\end{definition}

In other words, $\Delta$ is negligible if the fraction of entries it contains from $\{1,\dots,N\}$ vanishes as $N$ grows.

\begin{rmk}
\label{rmk-negligible}
Any finite subset of $\Z_+$ is negligible. Furthermore, the definition of negligible depends crucially on the ordering of $\Z_+$. Indeed, if $\Delta$ is any infinite subset of $\Z_+$, we can always reorder $\Z_+$ so that $|\Delta \cap \{1,\dots,N\}|/N$ converges to a positive number, by interlacing the elements of $\Delta$ and $\Z_+ \setminus \Delta$. Similarly, we can reorder $\Z_+$ so that arbitrarily large gaps occur between the elements of $\Delta$, making $\Delta$ negligible under that ordering.
\end{rmk}

\subsection{Assumptions \ref{Theta-incoherence} and \ref{A-incoherence}}
\label{sec-assumptions}

In \cite{chen2019structured}, assumption \ref{Theta-incoherence} is replaced by the assumption that the limsup of the minimum singular values of the matrices $\Theta_{[1:N,1:K]} / \sqrt{N}$ is positive as $N \to \infty$; an analogous assumption is made in place of assumption \ref{A-incoherence}. The assumptions in \cite{chen2019structured} imply assumptions \ref{Theta-incoherence} and \ref{A-incoherence}. Indeed, suppose $B = [B_1,\dots,B_K]$, where each $B_k$ is a bounded function on $\Z_+$; and suppose that
\begin{align}
\label{limit-sv}
\limsup_{n \to \infty} \frac{\sigma_{\min}(B_{[1:n,1:K]})}{\sqrt{n}} > 0,
\end{align}
where $\sigma_{\min}$ denotes the smallest singular value. Then $B_1,\dots,B_K$ are linearly independent, since for sufficiently large $n$ the minimum singular value of $B_{[1:n,1:K]}$ must be positive.

It is not difficult to see that the converse statement is false; that is, assumptions \ref{Theta-incoherence} and \ref{A-incoherence} do not imply the corresponding assumptions from \cite{chen2019structured}. For example, we may take $\supp(B_1)$ to be the positive even integers, and $\supp(B_2)$ to be the positive odd integers; and define $B_1(2i) = 1/2i$ and $B_2(2i-1)=1/(2i-1)$. Then $B_1$ and $B_2$ are linearly independent. Take any large $n$ and $m < n$. Define $\TT_n = \{1,\dots,m\}$ and $\RR_n = \{m+1,\dots,n\}$, and partition $B^{(n)} \equiv B_{[1:n,1:2]} / \sqrt{n}$ into $B^{(n)}(\TT_n)$ and $B^{(n)}(\RR_n)$. Then the squared Frobenius norm of $B^{(n)}$ may be bounded above:
\begin{align}
\|B^{(n)}\|_F^2
= \|B^{(n)}(\TT_n)\|_F^2 + \|B^{(n)}(\RR_n)\|_F^2
\le \frac{2m}{n} + \frac{1}{m^2} \frac{2(n-m)}{n}.
\end{align}
Choosing $m=O(\sqrt{n})$ shows that the norm of $B^{(n)}$ converges to $0$ as $n \to \infty$, and so condition \eqref{limit-sv} is violated.

\subsection{Identifiability}
\label{sec-id}

As noted in Remark \ref{rmk-id}, \cite{chen2019structured} employs a weaker notion of identifiability of $\Theta_k$ than the one we use in the present work. In particular, the definition from \cite{chen2019structured} permits $\Theta_k$ and $\widetilde \Theta_k$ to differ (modulo a global rescaling) on negligible subsets of $\Z_+$.

As noted in Remark \ref{rmk-negligible}, any finite set is negligible, and any infinite subset may be made negligible or non-negligible by reordering $\Z_+$. Consequently, the definition of identifiability employed in \cite{chen2019structured} depends on the ordering of $\Z_+$. By contrast, the stronger notion of identifiability of $\Theta_k$ employed in the present work does not depend on a specified ordering.

\subsection{Condition \eqref{eq-intersection}}

Condition \eqref{eq-intersection} from Theorem \ref{id-theta} may be easily verified for any specified matrix $Q$. A similar condition appears in Theorem \ref{id-theta} from \cite{chen2019structured}, which we may state as follows:
\begin{align}
\label{eq-intersection3}
\{k\} = 
\bigcap_{S \subset \{1,\dots,K\} \atop {k \in S \atop \RR(S) \text{ non-negligible} }} S.
\end{align}
The right side of \eqref{eq-intersection3} is the intersection of all subsets $S \subset \{1,\dots,K\}$ containing $k$ where $\RR(S)$ are \emph{non-negligible}; by contrast, condition \eqref{eq-intersection} from Theorem \ref{id-theta} is the intersection of all such $S$ with $\RR(S)$ that are \emph{non-empty}. While the latter condition may be verified for finite-sized matrices $M$, the condition that $\RR(S)$ is non-negligible is an asymptotic condition, which is not determinable for a finite sized matrix. Furthermore, as noted in Remark \ref{rmk-negligible}, it depends on the ordering of the indices in $\Z_+$. While conceptually similar to \eqref{eq-intersection3}, the condition \eqref{eq-intersection} given in Theorem \ref{id-theta} is more suitable in practical settings as it is well-defined non-asymptotically.

\section*{Acknowledgements}

I am grateful to Xiaoou Li for discussing her work from \cite{chen2019structured}, and to the reviewers for their helpful comments. I acknowledge support from NSF award IIS-1837992 and BSF award 2018230.

\bibliographystyle{plain}
\bibliography{refs}

\begin{thebibliography}{10}

\bibitem{anderson1956statistical}
Theodore~W. Anderson and Herman Rubin.
\newblock Statistical inference in factor analysis.
\newblock In {\em Proceedings of the Third Berkeley Symposium on Mathematical
  Statistics and Probability}, volume~5, pages 111--150, 1956.

\bibitem{anderson1962introduction}
Theodore~Wilbur Anderson.
\newblock {\em An Introduction to Multivariate Statistical Analysis}.
\newblock Wiley, 1962.

\bibitem{bai2012statistical}
Jushan Bai and Kunpeng Li.
\newblock Statistical analysis of factor models of high dimension.
\newblock {\em Annals of Statistics}, 40(1):436--465, 2012.

\bibitem{bartholomew2011latent}
David~J. Bartholomew, Martin Knott, and Irini Moustaki.
\newblock {\em Latent Variable Models and Factor Analysis: A Unified Approach}.
\newblock John Wiley \& Sons, 2011.

\bibitem{bing2019adaptive}
Xin Bing, Florentina Bunea, Yang Ning, and Marten Wegkamp.
\newblock Adaptive estimation in structured factor models with applications to
  overlapping clustering.
\newblock {\em Annals of Statistics}, 2019.

\bibitem{chen2019structured}
Yunxiao Chen, Xiaoou Li, and Siliang Zhang.
\newblock Structured latent factor analysis for large-scale data:
  Identifiability, estimability, and their implications.
\newblock {\em Journal of the American Statistical Association}, 2019.

\bibitem{dobriban2020optimal}
Edgar Dobriban, William Leeb, and Amit Singer.
\newblock Optimal prediction in the linearly transformed spiked model.
\newblock {\em Annals of Statistics}, 48(1):491--513, 2020.

\bibitem{dobriban2019deterministic}
Edgar Dobriban and Art~B. Owen.
\newblock Deterministic parallel analysis: an improved method for selecting
  factors and principal components.
\newblock {\em Journal of the Royal Statistical Society: Series B (Statistical
  Methodology)}, 81(1):163--183, 2019.

\bibitem{donoho2018optimal}
David~L. Donoho, Matan Gavish, and Iain~M Johnstone.
\newblock Optimal shrinkage of eigenvalues in the spiked covariance model.
\newblock {\em Annals of Statistics}, 46(6), 2018.

\bibitem{hong2018asymptotic}
David Hong, Laura Balzano, and Jeffrey~A. Fessler.
\newblock Asymptotic performance of {PCA} for high-dimensional heteroscedastic
  data.
\newblock {\em Journal of Multivariate Analysis}, 167:435--452, 2018.

\bibitem{johnstone2001distribution}
Iain~M Johnstone.
\newblock On the distribution of the largest eigenvalue in principal components
  analysis.
\newblock {\em Annals of Statistics}, 29(2):295--327, 2001.

\bibitem{jolliffe2002principal}
Ian Jolliffe.
\newblock {\em Principal Component Analysis}.
\newblock Wiley Online Library, 2002.

\bibitem{koopmans1950identification}
Tjalling~C. Koopmans and Olav Reiers{\o}l.
\newblock The identification of structural characteristics.
\newblock {\em The Annals of Mathematical Statistics}, 21(2):165--181, 1950.

\bibitem{leeb2021optimal}
William Leeb and Elad Romanov.
\newblock Optimal spectral shrinkage and {PCA} with heteroscedastic noise.
\newblock {\em IEEE Transactions on Information Theory}, available online,
  2021.

\bibitem{reiersol1950identifiability}
Olav Reiers{\o}l.
\newblock On the identifiability of parameters in {T}hurston's multiple factor
  analysis.
\newblock {\em Psychometrika}, 15(2):121--149, 1950.

\bibitem{shapiro1985identifiability}
Alexander Shapiro.
\newblock Identifiability of factor analysis: Some results and open problems.
\newblock {\em Linear Algebra and its Applications}, 70:1--7, 1985.

\bibitem{stock2002forecasting}
James~H. Stock and Mark~W. Watson.
\newblock Forecasting using principal components from a large number of
  predictors.
\newblock {\em Journal of the American Statistical Association},
  97(460):1167--1179, 2002.

\end{thebibliography}

\end{document}